\definecolor{wiasblue}   {cmyk}{1.0, 0.60, 0, 0}
\definecolor{mlugreen}{RGB}{0,81,51}
\def\Z{\mathbb{Z}}
\def\E{\mathbb{E}}
\def\P{\mathbb{P}}
\def\Q{\mathbb{Q}}
\def\mc{\mathcal}
\def\ms{\mathsf}
\renewcommand\ge{\geqslant}
\renewcommand\le{\leqslant}
\def\e{\varepsilon}
\def\z{\zeta}
\def\th{\theta}
\def\la{\lambda}
\def\t{\tau}
\def\f{\frac}
\def\g{\gamma}
\def\de{\delta}
\def\d{{\rm d}}
\def\r{\rho}
\def\one{\mathbbmss{1}}
\def\ms{\mathsf}
\def\mc{\mathcal}
\def\Ip{\mc I^+}
\def\Ipm{\mc I^\pm}
\def\Im{\mc I^-}
\def\a{\alpha}
\def\b{\beta}
\def\L{\Lambda}
\def\Efr{E_{\ms b}}
\def\Era{E_{\ms c}}
\def\La{\Lambda}
\def\i{\infty}
\def\lf{\lfloor}
\def\rf{\rfloor}
\def\tf{\tfrac}
\def\tc{T^\b}
\definecolor{halfgray}{gray}{0.55}
\definecolor{webgreen}{rgb}{0,.5,0}
\definecolor{webbrown}{rgb}{.6,0,0}
\definecolor{Maroon}{cmyk}{0, 0.87, 0.68, 0.32}
\definecolor{RoyalBlue}{cmyk}{1, 0.50, 0, 0}
\definecolor{Black}{cmyk}{0, 0, 0, 0}
\definecolor{pinkish}{RGB}{255, 192, 203}
\definecolor{orange}{rgb}{216, 64, 0}
\newcommand{\CH}[1]{}
\newcommand{\HP}[1]{}
\def\Xs{X^{\ms s}}
\theoremstyle{plain}
\newtheorem{theorem}{Theorem}[section]
\newtheorem{proposition}[theorem]{Proposition}
\newtheorem{lemma}[theorem]{Lemma}
\theoremstyle{definition}
\newtheorem{example}[theorem]{Example}
\theoremstyle{remark}
\keywords{random walk, dynamic random environment, invariance principle}
\subjclass[2010]{60K35; 60F10; 82C22}
\date{\today}
\begin{document}

\title{Invariance principle for random walks on dynamically averaging random conductances}
\author{Stein Andreas Bethuelsen}
\author{Christian Hirsch}
\author{Christian M\"onch}
\address[Stein Andread Bethuelsen]{University of Bergen, Department of Mathematics, Allegaten 41, 5020 Bergen, Norway}
\email{stein.bethuelsen@uib.no}
\address[Christian Hirsch]{University of Groningen, Bernoulli Institute, Nijenborgh 9, 9747 AG Groningen, The Netherlands.}
\email{c.p.hirsch@rug.nl}
\address[Christian M\"onch]{Johannes Gutenberg-Universit\"at Mainz, Institut f\"ur Mathematik,  Staudingerweg 9, 55099 Mainz, Germany.
}
\email{cmoench@uni-mainz.de}

\begin{abstract}
	We prove an invariance principle for continuous-time random walks in a dynamically averaging environment on $\mathbb Z$. In the beginning, the conductances may fluctuate substantially, but we assume that as time proceeds, the fluctuations decrease according to a typical diffusive scaling and eventually approach constant unit conductances. The proof relies on a coupling with the standard continuous time simple random walk.
\end{abstract}

\maketitle
\section{Introduction}


Since its inception in \cite{Solomon75}, random walks in random environments have evolved into a flourishing field of research with manifold connections to other branches in probability, physics and chemistry. Inside this all-encompassing framework, random walks on dynamic random conductances form a rich class of models. Here, the walker moves according to jump rates on the edges of the underlying graph that are evolving according to some stochastic process simultaneously with the movement of the walker. We refer the reader to \cite{rwdrewnum} for an excellent overview of the plethora of models studied in literature and to \cite{ACDS18, Biskup19} on the topic of invariance principles.

Despite the variety of different models considered to date, they almost exclusively do not deviate from one central assumption: the environment should be \emph{time-stationary}. 
Results outside this scope are rare and often require rather strong mixing assumptions \cite{ABF18,denhollander2014,RedigVoell13} and prove weaker results (e.g.\ LLN) \cite{avena11,Bet16,BetHey17}.

However, the time-stationary setting completely ignores a very natural form of dynamics, namely those converging to a common deterministic limiting value with decaying fluctuations over time. For a simple example, we may think of the time-averages of renewal processes attached to each of the edges. Moreover, since the law of large numbers is so ubiquitous also far more elaborate models such as KPZ-type interface growth processes are of this nature \cite{kpz, rough}.

We show that when considering  such an instationary time-averaging setting on the line $\Z$, then an invariance principle holds under surprisingly general conditions. Our central requirement is a sharp concentration of super-diffusive fluctuations for increments in the time-evolution of the environment. In particular, we do not need to put any kind of mixing condition -- be it in space or time. As a specific example, we show that environments based on renewal processes fit into this setting. 
The proof of the invariance principle relies on a coupling construction to a simple random walk crucially exploiting the one-dimensional structure of the underlying graph.

In Section \ref{mod_sec}, we introduce precisely the super-diffusive concentration condition and state the invariance principle, which is then proved in Section \ref{sec:proofs}.

\section{Model and invariance principle}
\label{mod_sec}

%
%
Consider the integer lattice $\Z = (V, E)$ with edges drawn between successive sites and let $\{\L_e(\cdot)\}_{e \in E}$ be a family of almost surely non-decreasing stochastic processes on $[0, \infty)$ governing the time-evolution of the random environment.  We henceforth write $\L_e([a, b]) := \L_e(b) - \L_e(a)$ for the increment of $\L_e$ over an interval $[a, b] \subset [0, \infty)$.

A nearest-neighbor random walk $\{X(t)\}_{t \ge 1}$ on $\Z$ starts at $X(1) = 0$. Given $\{\L_e\}_{e \in E}$, the walker sitting at $X(t) = v$ at time $t$ jumps along an incident edge $e$ at rate $\La_e(t)/t$. We call $X(t)$ a \emph{capricious random walk} (CRW) and illustrate its transition dynamics  in
 Figure \ref{model_fig}.

\begin{figure}[!htpb]
	\begin{center}
\begin{tikzpicture}
	\draw[dashed] (-1,0)--(11,0);

	\fill[black!20!white] (0, 0) rectangle (2, 3);
	\draw[thick] (0, 0) rectangle (2, 3);
	
	\fill[black!20!white] (2, 0) rectangle (4, 6);
	\draw[thick] (2, 0) rectangle (4, 6);
	
	\fill[black!20!white] (4, 0) rectangle (6, 2.4);
	\draw[thick] (4, 0) rectangle (6, 2.4);
	
	\fill[black!20!white] (6, 0) rectangle (8, 5);
	\draw[thick] (6, 0) rectangle (8, 5);
	
	\fill[black!20!white] (8, 0) rectangle (10, 3.2);
	\draw[thick] (8, 0) rectangle (10, 3.2);


	\draw[-{Triangle[width=10pt,length=10.5pt]}, line width=5pt, black](0,2) -- (0.95, 2);
	\draw[-{Triangle[width=8pt,length=8pt]}, line width=3pt, blue](0,2) -- (.9, 2);
	\draw[-{Triangle[width=10pt,length=10.5pt]}, line width=5pt, black](2,2) -- (1.05, 2);
	\draw[-{Triangle[width=8pt,length=8pt]}, line width=3pt, blue](2,2) -- (1.1, 2);

	\draw[-{Triangle[width=30pt,length=10.5pt]}, line width=12pt, black](2,2) -- (2.95, 2);
	\draw[-{Triangle[width=24pt,length=8pt]}, line width=10.5pt, blue](2,2) -- (2.9, 2);
	\draw[-{Triangle[width=30pt,length=10.5pt]}, line width=12pt, black](4,2) -- (3.05, 2);
	\draw[-{Triangle[width=24pt,length=8pt]}, line width=10.5pt, blue](4,2) -- (3.1, 2);

	\draw[-{Triangle[width=8pt,length=10.5pt]}, line width=5pt, black](4,2) -- (4.95, 2);
	\draw[-{Triangle[width=6pt,length=8pt]}, line width=3pt, blue](4,2) -- (4.9, 2);
	\draw[-{Triangle[width=8pt,length=10.5pt]}, line width=5pt, black](6,2) -- (5.05, 2);
	\draw[-{Triangle[width=6pt,length=8pt]}, line width=3pt, blue](6,2) -- (5.1, 2);

	\draw[-{Triangle[width=22pt,length=10.5pt]}, line width=12pt, black](6,2) -- (6.95, 2);
	\draw[-{Triangle[width=18pt,length=8pt]}, line width=10.5pt, blue](6,2) -- (6.9, 2);
	\draw[-{Triangle[width=22pt,length=10.5pt]}, line width=12pt, black](8,2) -- (7.05, 2);
	\draw[-{Triangle[width=18pt,length=8pt]}, line width=10.5pt, blue](8,2) -- (7.1, 2);

	\draw[-{Triangle[width=12pt,length=10.5pt]}, line width=5pt, black](8,2) -- (8.95, 2);
	\draw[-{Triangle[width=10pt,length=8pt]}, line width=3pt, blue](8,2) -- (8.9, 2);
	\draw[-{Triangle[width=12pt,length=10.5pt]}, line width=5pt, black](10,2) -- (9.05, 2);
	\draw[-{Triangle[width=10pt,length=8pt]}, line width=3pt, blue](10,2) -- (9.1, 2);

	\fill (0, 0) circle (3pt);
	\fill (2, 0) circle (3pt);
	\fill (4, 0) circle (3pt);
	\fill (6, 0) circle (3pt);
	\fill (8, 0) circle (3pt);
	\fill (10, 0) circle (3pt);


	\coordinate[label=-90:{{$\Z$}}] (A) at (5,0);
	\draw[thick, ->] (-.5, 1)--(-.5, 5);
	\coordinate[label=180:{{$\f{\L_e(t)}t$}}] (A) at (-.5,3);

\end{tikzpicture}
	\end{center}
	\caption{Transition scheme of the CRW. Arrow thickness represents jump rate towards edge.}
	\label{model_fig}
\end{figure}
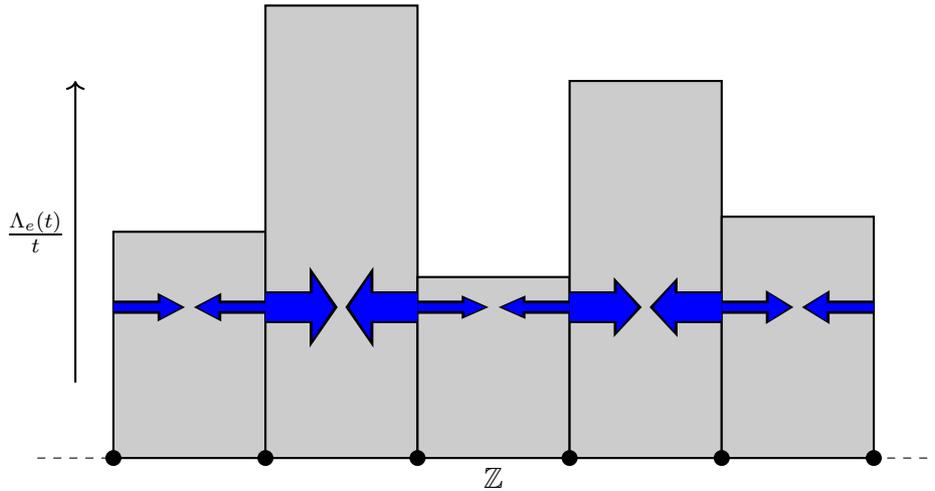

We assume that for some $\z > 5$ the following condition holds. For any $\e > 0$ and any interval $I \subset [0, \infty)$ of length $|I|$, 
\begin{equation}\label{eq:deviation}\tag{{\bf DIFF}}
	\sup_{e \in E}\P\big(\big|\L_e(I) - |I|\big| > |I|^{1/2 + \e}\big) \le c|I|^{-\z},
\end{equation}
where $c = c(\e)$ is a finite constant only depending on $\e$. In particular, an application of the Borel-Cantelli lemma yields that for every $e \in E$ almost surely $\L_e(t)/t\to 1$ as $t\to\infty$.

Before stating the main result, we illustrate that condition \eqref{eq:deviation} holds for environments induced by renewal processes.

%
%
\begin{example}[Renewal process]
	Let $\{Y_n\}_{n \ge 1}$ be a sequence of non-atomic iid positive random variables with $\E[Y_1] = 1$ and having some finite exponential moment. Let $\{\L_e(t)\}_{t \ge 0}$ be the associated stationary renewal process \cite[Section 5.3]{asmussen}. Specifically, let $Y_0'$ be independent of $\{Y_n\}_{n \ge 1}$ and distributed according to the size-biased distribution of $Y_1$, and  set $Y_0 = UY_0'$ where $U \sim \ms{Unif}([0, 1])$ is an independent uniform random variable. Then, define the renewal process 
	$$\L_e(t) := \inf\big\{n \ge 0: \, S_n > t\big\},$$
	where $S_n := \sum_{0\le i \le n}Y_i$. Although $\{\L_e(t)\}_{t \ge 0}$ has stationary increments by \cite[Theorem 3.3]{asmussen}, the conductance $\L_e(t) / t$ is in general neither stationary nor Markovian. 

	Since the increments are stationary, it suffices to verify \eqref{eq:deviation} for intervals of the form $I = [0,t]$. To that end, put $n_-(t):= \lf t - t^{1/2 + \e}\rf$ and $n_+(t):= \lceil t + t^{1/2 + \e}\rceil$.
	Then, 
	$$\P(|\L_e(t) - t| \ge t^{1/2 + \e}) = \P(\L_e(t) \le n_-(t)) + \P(\L_e(t) \ge n_+(t)),$$
	and we explain how to deal with the first expression, noting that the arguments for the second are similar but easier. Now,
	$$\P(\L_e(t) \le n_-(t)) = \P(S_{n_-(t)} \ge t) \le \P(Y_0 \ge n_-(t)) + \P\big((S_{n_-(t)} - Y_0) - n_-(t) \ge t - 2n_-(t)\big).$$
	First, since the $Y_1$, and therefore $Y_0$, is assumed to have some exponential moment, the first expression decays stretched exponentially in $t$. Moreover, since $S_{n_-(t)} - Y_0$ is a sum of $n_-(t)$ iid mean-1 random variables, moderate deviations theory also implies that the second probability decays stretched exponentially in $t$. In particular, condition \eqref{eq:deviation} is satisfied for any $\z > 0$.
\end{example}

Although condition \eqref{eq:deviation} does not impose any constraints on mixing in space and time, it is stronger than it might appear at first sight as it concerns arbitrary intervals. For instance, thinking of first-passage percolation on $\Z^2$ and defining $\L_{\{i, i + 1\}}(t)$ as the first-passage time to a node $(t, i)$, condition \eqref{eq:deviation} holds for intervals of the form $I = [0, t]$. However, it seems questionable, whether this remains true for general intervals.

%
%
\begin{theorem}[Invariance principle]
	\label{mainThm}
	Assume  that \eqref{eq:deviation} holds with $\z > 5$. Then, for almost every realization $\omega$ of the environment $\{\L_e\}_{e \in E}$, the CRW in $\omega$ satisfies the invariance principle. That is, $\{X({tT})/\sqrt {2T}\}_{t \le 1}$  converges in distribution to standard Brownian motion.
\end{theorem}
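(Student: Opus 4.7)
The plan is to couple the capricious walk $X$ with a standard continuous-time simple random walk $Y$ on $\Z$ having unit jump rate on every edge, using a joint graphical construction, and to reduce the invariance principle for $X$ to Donsker's theorem for $Y$ by showing that the discrepancy $\sup_{s \le T}|X(s) - Y(s)|$ is of order $o(\sqrt T)$ in probability. Once this is done, the conclusion $X(tT)/\sqrt{2T}\Rightarrow B_t$ follows from the classical invariance principle for $Y$ together with a standard tightness argument.

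As a first step I would establish a ``regular environment'' event $\mc G_T$ on which $|\Lambda_e(s)/s - 1| \le s^{-1/2+\e}$ for every $s \in [T^{1/4}, T]$ and every edge $e$ with $|e| \le T^{1/2+\delta}$. This follows from applying \eqref{eq:deviation} on a dyadic grid of times, interpolating by monotonicity of $\Lambda_e$, and taking a union bound over the polynomially many relevant edges; the assumption $\zeta > 5$ leaves ample margin. A complementary a priori bound shows that both $X$ and $Y$ remain inside $[-T^{1/2+\delta}, T^{1/2+\delta}]$ with high probability up to time $T$, so only controlled edges matter. For the coupling itself I would use one Poisson point process $\mc N_e$ per edge on $\R_+\times\R_+$ with Lebesgue intensity: at each atom $(s,u)\in \mc N_e$ any walker sitting at an endpoint of $e$ crosses $e$ provided $u$ lies below its current rate -- $1$ for $Y$ and $\Lambda_e(s)/s$ for $X$. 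The difference $D = X - Y$ then decomposes as $D = A + M$, where the drift is
\[
A(t) = \int_1^t \f{\Lambda_{\{X(s), X(s)+1\}}(s) - \Lambda_{\{X(s)-1, X(s)\}}(s)}{s}\, ds,
\]
and $M$ is a martingale whose predictable quadratic variation is bounded on $\mc G_T$ by $\int_1^T s^{-1/2+\e}\, ds = O(T^{1/2+\e})$, so $\sup_{s \le T}|M(s)| = o(\sqrt T)$ by Doob's maximal inequality.

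The main obstacle, and where I expect the bulk of the work to go, is the drift $A$. The naive pointwise bound $|a(s, X(s))| \le 2 s^{-1/2+\e}$ only yields $|A(T)| = O(T^{1/2+\e})$, which is borderline $\sqrt T$ and insufficient. The key observation is that the integrand $a(s,v)$ is a discrete spatial gradient of the potential $\Psi(s,v) := \Lambda_{\{v-1,v\}}(s)/s - 1$, which is itself uniformly small on $\mc G_T$. Rewriting $A(T)$ as a sum over sites weighted by the walk's local time $\ell_v(T)$ and performing a discrete summation by parts transfers the gradient onto differences of local times; because one-dimensional nearest-neighbour walks have local-time increments that fluctuate on a much smaller scale than the local times themselves, combined with the uniform smallness of $\Psi$ this delivers $|A(T)| = o(\sqrt T)$ with probability tending to $1$. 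It is exactly here that the one-dimensional structure of $\Z$ is essential, and also where room in $\zeta$ (beyond what is needed for the union bound in Step~1) is consumed to make the summation-by-parts estimate quantitative.
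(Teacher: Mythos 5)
Your overall strategy (couple with a rate\nobreakdash-$2$ SRW and show $\sup_{s\le T}|X(s)-Y(s)|\in o_p(\sqrt T)$) matches the paper's, but the coupling you choose cannot deliver this bound, and the key estimate you assert for it is wrong. With shared Poisson clocks per edge, the two walkers only move simultaneously while they occupy the \emph{same} site; the moment one of them takes an unmatched jump (which happens, while coalesced, at rate $\approx\sum_{e\sim X(s)}|\Lambda_e(s)/s-1|$, i.e.\ order $T^{1/2+\e}$ times over the horizon, and already at order-one rate at early times), they sit at different sites, read disjoint sets of atoms, and evolve essentially independently until they happen to meet again. During such excursions every jump of either walker changes $D=X-Y$ by $\pm1$, so the predictable quadratic variation of $M$ accrues at rate $\approx 4$, not $s^{-1/2+\e}$; since excursion lengths of a one-dimensional difference walk are heavy-tailed, a positive fraction of $[0,T]$ is spent decoupled, $\langle M\rangle_T$ is of order $T$, and $\sup_{s\le T}|X(s)-Y(s)|$ is genuinely of order $\sqrt T$ rather than $o(\sqrt T)$. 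Your claim ``$\langle M\rangle_T=O(T^{1/2+\e})$ on $\mathcal G_T$'' conflates the smallness of the \emph{decoupling rate} with smallness of the quadratic variation of the \emph{difference}; the good environment event $\mathcal G_T$ does not prevent separation. The paper avoids exactly this trap: its SRW is constructed as a thinning of the CRW's own jump skeleton (at each jump time $\t_i$ of $X$ along $e_i$, the SRW follows the same arrow with probability $(1-\e_T)\t_i/\Lambda_{e_i}(\t_i)$ and otherwise stays put), so the two walks traverse the same edges in the same order and the discrepancy is a signed count of skipped jumps; smallness then comes from cancellation between skipped right- and left-crossings of each edge, organized through the pairing of successive traversals (Proposition~\ref{margeProp} and Lemma~\ref{busyLem}) --- this is where the one-dimensional structure is used, not through local times.

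Two further points would need repair even for the drift term. The summation-by-parts bound requires that the spatial increments of the \emph{CRW's} occupation measure are of smaller order than the occupation measure itself; this Ray--Knight-type input is available for the SRW but not a priori for $X$ in the unmixed, instationary environment, so the argument as sketched is circular. And the regime of small times, where $s^{-1/2+\e}$ is not small and the jump rates are not yet controlled, is passed over; the paper spends Propositions~\ref{earlyProp} and~\ref{initProp} (linear a priori bound via Poisson concentration, then a bootstrap over time scales $T^{1-(1-\a)^k}$) precisely to show that everything before the coupling time $T^\b$ contributes only $o_p(\sqrt T)$.
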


We do not claim the concentration exponent $\z > 5$ to be optimal. However, we could well imagine that a pathologically slow concentration could lead to a significantly slowed down walker showing up as a time-scaled BM, or even lead to a transient walker.

The main tool to prove Theorem \ref{mainThm} is a finely adapted coupling with a continuous time symmetric simple random walk (SRW).

\section{Proofs}\label{sec:proofs}
%
%
We now turn to the proof of Theorem~\ref{mainThm}. We first formulate three central auxiliary statements in Section~\ref{ssec:main} and show how they imply the invariance principle, then give detailed proofs of the auxiliary results in Sections~\ref{ssec:2}--\ref{init_sec}.
\subsection{Proof of Theorem \ref{mainThm}}\label{ssec:main}
We consider the total deviations of the CRW from the origin separately on different time scales. To make this precise, let 
$$R_{t_1, t_2} := \sup_{t_1 \le t \le t_2}|X(t) - X({t_1})|, \quad t_2 \ge t_1, $$
denote the \emph{range} of the walker in the time window $[t_1, t_2]$ and set $R_t := R_{0, t}$. Sometimes, in literature, the range is defined by taking the supremum of $|X(t) - X(t')|$ over pairs $t, t' \in [t_1, t_2]$, but the two variants differ only by a factor that is irrelevant for the following.

To lead up to the proof of Theorem \ref{mainThm}, we show that the range is essentially diffusive. This manifests itself in the following two propositions whose proofs are postponed to Sections \ref{ssec:2} and \ref{init_sec}, respectively.

\begin{proposition}[Linear displacement bound]
	\label{earlyProp}
It holds that \[\lim_{t \to \i}\P(R_t > 2t) = 0.\]
\end{proposition}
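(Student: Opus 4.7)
The plan is to reduce the event $\{R_t > 2t\}$ to a controlled count of directional jumps of the CRW, and then apply Poissonian concentration.

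First, I would decompose by direction. Let $N^+_t$ and $N^-_t$ denote the numbers of right- and leftward jumps in $[1,t]$, so that $X(s) = N^+_s - N^-_s$ and the event $R_t > 2t$ forces $N^+_t > 2t$ or $N^-_t > 2t$. By the symmetry of the argument below I focus on $N^+$. Introduce the stopping time $\t := \inf\{s \ge 1 : |X(s)| > 2t\}$, so that $\{R_t > 2t\} = \{\t \le t\}$. For $s \le \t$ the walker lies in $[-2t,2t]$, hence its rightward jump rate is bounded by $\mu(s) := \sup_{|e| \le 2t+1} \L_e(s)/s$, and $N^+_{\t \wedge t}$ is stochastically dominated by a Poisson random variable with mean $\int_1^t \mu(s)\,\d s$.

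Next, I would control $\mu(s)$ via~\eqref{eq:deviation}. Using dyadic scales $s_k = 2^k$ with $t^{1/2} \le s_k \le t$ and a union bound over the $O(t)$ edges with $|e| \le 2t+1$ and the $O(\log t)$ dyadic scales, I obtain an event $B_t$ with $\P(B_t^\c) = O(t^{1-\z/2}\log t) \to 0$ (which only requires $\z > 2$, comfortably implied by $\z > 5$) on which $\L_e(s_k) \le s_k + s_k^{1/2+\e}$ for all admissible $e$ and $k$. Monotonicity of $\L_e$ then upgrades this to $\L_e(s)/s \le 1 + s^{-1/2+\e}$ for $s \in [t^{1/2}, t]$, and to the cruder $\L_e(s)/s \le 2 t^{1/2}/s$ on $[1, t^{1/2}]$. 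Integrating, $\int_1^t \mu(s)\,\d s \le t + O(t^{1/2+\e}) = t + o(t)$ on $B_t$.

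Finally, a standard Chernoff bound for the Poisson distribution gives $\P(\mathrm{Poi}(t+o(t)) > 2t) \le e^{-ct}$ for some $c > 0$, and combining this with $\P(B_t^\c) \to 0$ and the symmetric leftward estimate yields $\P(R_t > 2t) \to 0$. The main technical point I expect to contend with is the interplay between the walker's position and the uniform rate estimate: the event $B_t$ must cover every edge the walker could visit, which is handled by the stopping time $\t$ restricting attention to $|e| \le 2t + 1$. A secondary subtlety is that~\eqref{eq:deviation} only yields useful concentration for intervals of macroscopic length, but monotonicity of $\L_e$ allows me to propagate the bound at $s = t^{1/2}$ downward, so that the initial-segment contribution to the integrated rate is at most $O(t^{1/2}\log t) = o(t)$ and does not spoil the comparison with $2t$.
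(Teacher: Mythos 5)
Your strategy parallels the paper's own proof: control the conductances uniformly over the $O(t)$ edges the walker can reach, treat the initial time segment (where $\L_e(s)/s$ need not be close to $1$) separately by a crude monotonicity bound, and then dominate the relevant jump counts by a Poisson variable and apply Poisson concentration. Your directional counts $N^\pm$ and the stopping time $\t$ play exactly the role of the paper's restriction to the window $[-2t,2t]$, and your segment $[1,t^{1/2}]$ replaces the paper's window $[0,t^\a]$.

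There is, however, one step that fails as written: the ``monotonicity upgrade'' from dyadic scales. With $s_k=2^k$, monotonicity only gives, for $s\in[s_{k-1},s_k]$, that $\L_e(s)\le \L_e(s_k)\le s_k+s_k^{1/2+\e}\le 2s+(2s)^{1/2+\e}$, i.e.\ $\L_e(s)/s\le 2+o(1)$, not $1+s^{-1/2+\e}$. This is not a cosmetic loss, because your threshold is exactly $2t$: with the rate bound $2+o(1)$ the dominating Poisson mean becomes $2t+o(t)$, and $\P\big(\mathrm{Poi}(2t+o(t))>2t\big)$ does not tend to $0$, so the final Chernoff step collapses. The repair stays entirely within your framework: use a geometric grid with ratio $1+\de$ for a small fixed $\de<1/2$ (still $O(\log t)$ scales, so your union bound continues to need only $\z>2$), or integer-spaced scales as in Lemma~\ref{envGrowLem} of the paper (the union bound then costs an extra factor $t$, harmless for $\z>5$). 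Either choice gives $\mu(s)\le 1+2\de$ on $[t^{1/2},t]$, hence an integrated rate of at most $(1+2\de)t+O(t^{1/2}\log t)$, and the comparison with $2t$ then goes through; the same constant-factor caveat affects your bound $\L_e(s)/s\le 2t^{1/2}/s$ on $[1,t^{1/2}]$, but there it is irrelevant since that contribution is $o(t)$ in any case.
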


For $\{S(t)\}_{t \ge 0}$ a family of random variables and $f:[0, \i) \to [0, \i)$ any function, we write $S(t) \in o_p(f(t))$ if $S(t)/f(t)$ tends to 0 in probability as $t \to \i$.

\begin{proposition}[Small displacement at intermediate times]
	\label{initProp}
	Let $(\z - 1)^{-1} < \a < \b \le 1$. Then, for every $\e > 0$, 
	$$R_{T^\a, T^\b} \in o_p( T^{\b - \a/2 + \e}).$$
	If, moreover, $\b < 1$, then 
$$	{R_{T^\a, T^\b}}\in o_p({\sqrt T}).$$
\end{proposition}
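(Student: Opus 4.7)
The plan is to show that the range on $[T^\a, T^\b]$ is essentially diffusive, i.e., of order $\sqrt{T^\b - T^\a} = O(T^{\b/2})$, which immediately yields both statements: $\b/2 < \b - \a/2 + \e$ whenever $\a < \b$, and $\b/2 < 1/2$ whenever $\b < 1$. The mechanism is that \eqref{eq:deviation} forces the jump rates $\L_e(t)/t$ to concentrate uniformly around $1$ throughout $[T^\a, T^\b]$, so the CRW can be directly compared to a rate-$1$ SRW.

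The concrete steps are as follows. First, I would apply Proposition~\ref{earlyProp} to restrict to the event that the walker remains in $[-2T^\b, 2T^\b]$ throughout $[T^\a, T^\b]$, leaving only $O(T^\b)$ edges to control. Next, I would apply \eqref{eq:deviation} with $I=[0,t]$, union-bounded over these edges and over a suitably chosen geometric time grid on $[T^\a, T^\b]$ (using the almost sure monotonicity of $\L_e$ to interpolate between grid points), to obtain on an event of probability $1-o(1)$ a uniform estimate of the form $|\L_e(t) - t| \le t^{1/2+\e'}$. The condition $\a > (\z-1)^{-1}$, combined with $\b \le 1$, is what keeps the total failure probability from blowing up.

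On this good event, I would decompose $X(t) - X(T^\a) = D(t) + B(t)$ into martingale plus drift. The quadratic variation of $D$ satisfies $\langle D\rangle_{T^\b} \le 3T^\b$, so Doob's $L^2$ maximal inequality gives $\sup_{t \le T^\b}|D(t)| = O_p(T^{\b/2+\de})$ for any $\de > 0$. The drift rate $|\L_{e_+}(s) - \L_{e_-}(s)|/s$ is bounded by $2s^{-1/2+\e'}$, and integrating yields $\sup_t|B(t)| = O(T^{\b/2+\b\e'})$. Combining these, $R_{T^\a, T^\b} = O_p(T^{\b/2 + \e''})$ for every $\e'' > 0$, and both assertions then follow immediately: $T^{\b/2+\e''} = o(T^{\b-\a/2+\e})$ as soon as $\e'' < (\b-\a)/2 + \e$, and $T^{\b/2+\e''} = o(\sqrt T)$ as soon as $\e'' < (1-\b)/2$.

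The main obstacle is the uniform rate control: \eqref{eq:deviation} delivers only a single-$(e,t)$ tail bound, whereas we need the estimate simultaneously over $\Theta(T^\b)$ edges and all $t \in [T^\a, T^\b]$. The grid spacing must be delicately tuned to balance the total number of grid points (driving the union bound) against the monotonicity-based interpolation error (which scales linearly in $t$), and the hypothesis $\a > (\z-1)^{-1}$ is precisely what makes this tradeoff feasible under $\z > 5$.
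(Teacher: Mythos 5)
Your argument is correct in substance, but it takes a genuinely different route from the paper -- and in fact proves more. The paper never attempts a direct diffusive bound on $[T^\a,T^\b]$: it restarts the coupling of Section~\ref{sec:coupling} at time $T^\a$ with delay $\e_{T^\a}=T^{-\g\a}$, bounds the displacement by the LIL for the SRW plus the number of coupling failures, which is only $O(T^{\b-\g\a})$ with $\g<1/2$ -- whence the exponent $\b-\a/2+\e$ in the first claim -- and then gets the second claim by bootstrapping over the time scales $t_k=T^{1-(1-\a)^k}$. You instead quench the environment, use the uniform rate control $|\L_e(s)-s|\le s^{1/2+\e'}$ on $[T^\a,T^\b]$ (this is exactly Lemma~\ref{envGrowLem} applied at $t=T^\a$, union-bounded over the $O(T^\b)$ edges reachable by Proposition~\ref{earlyProp}; the hypothesis $\a>(\z-1)^{-1}$ together with $\b\le1$ gives $\b-\a(\z-1)<0$, so the union bound closes), and then split $X$ into a compensated-jump martingale plus drift: Doob's $L^2$ inequality with $\langle D\rangle_{T^\b}\le 3T^\b$ gives $O_p(T^{\b/2+\de})$ for the martingale part, and integrating the drift bound $2s^{-1/2+\e'}$ gives $O(T^{\b/2+\b\e'})$. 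The resulting bound $R_{T^\a,T^\b}=O_p(T^{\b/2+\e''})$ is strictly stronger than the paper's first claim and yields both assertions at once, making the paper's multiscale induction for Part~2 unnecessary. What each approach buys: yours is shorter and sharper but relies on the explicit semimartingale structure and two-sided rate control; the paper's coupling argument is the one reused for Proposition~\ref{coupProp}, so it keeps the whole proof within a single coupling framework.

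Two points to tighten. First, you cannot literally ``restrict to the event'' that the walker stays in $[-2T^\b,2T^\b]$: that is a future event, and conditioning on it destroys the martingale property. Stop the walk at the exit time of $[-2T^\b,2T^\b]$, apply Doob to the stopped process (whose drift and quadratic variation are controlled deterministically on the good environment event), and remove the stopping at the end using $\P(R_{T^\b}>2T^\b)=o(1)$ from Proposition~\ref{earlyProp}. Second, the monotonicity-based interpolation error is proportional to the grid spacing, not to $t$, so a grid with spacing of order $s^{1/2}$ -- or simply the unit spacing already used in the proof of Lemma~\ref{envGrowLem} -- suffices; there is no delicate tuning needed once $\a(\z-1)>\b$.
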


We infer Proposition~\ref{earlyProp} from a fairly rough estimate showing that the maximal number of jumps of $X$ grows at most linearly. The key ingredient to establish Proposition \ref{initProp} and to conclude the proof of Theorem \ref{mainThm} is a coupling of $\{X(t)\}_{t\le T}$ with a continuous-time symmetric random walk $\{\Xs(t)\}_{ t\le T}$ on $\Z$, that we discuss in more detail in Section~\ref{sec:coupling}. The coupling is initiated at time $\tc$, in position $\Xs(\tc) := X(\tc)$ for some fixed $\b < 1$ to be specified below and $\Xs$ jumps at a slightly diminished rate of $2(1 - \e_T)$, where $\e_T = \e_T(\g)$ decays as $T^{-\g}$, for some $\g < 1/2$.

\begin{proposition}[Coupling to SRW]
	\label{coupProp}
	Fix $\g < 1/2$ and let $\Xs$ denote a symmetric random walk jumping at rate $2(1- \e_T)$. There exists $ \b < 1$ such that $\{X(t)\}_{\tc \le t \le T}$ and $\{\Xs(t)\}_{\tc \le t \le T}$ can be coupled so that \[{\sup_{\tc \le t \le T} |X(t) - \Xs(t)|}\in o_p({\sqrt T }).\] 
\end{proposition}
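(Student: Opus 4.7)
The plan is to construct a graphical coupling where $X^{\ms s}$ synchronizes with $X$ as much as possible, so that the discrepancy $D(t) := X(t) - X^{\ms s}(t)$ changes only at rare ``extra'' events of $X$, and then to control $\sup|D|$ via a martingale estimate. As preparation, applying \eqref{eq:deviation} with a union bound over the $O(T)$ edges within range $2T$ of the origin (which contains $X$'s range with high probability by Proposition~\ref{earlyProp}) and over a polynomial-density grid of times in $[\tc, T]$ (monotonicity of $\La_e$ fills the gaps in between), one obtains that, with high probability,
$$\de_T := \sup_{e,\, t \in [\tc, T]} |\La_e(t)/t - 1| \le T^{-\g'}$$
for some $\g' \in (\g, 1/2)$. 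The choice of $\b$ close to $1$ together with $\z > 5$ makes the union bound summable.

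Next I construct the coupling. Let $N^+$ and $N^-$ be independent rate-$(1-\e_T)$ Poisson processes on $[\tc, T]$: each arrival of $N^\pm$ triggers a simultaneous $\pm 1$ jump of \emph{both} $X$ and $X^{\ms s}$. For $X$ alone, I attach additional Poisson clocks at the vertex $X(s)$, of rate $\La_{\{X(s), X(s) \pm 1\}}(s)/s - (1-\e_T) \ge 0$ (nonnegativity ensured by $\de_T \le \e_T$), each triggering an extra $\pm 1$ jump of $X$. One readily checks that $X^{\ms s}$ is a genuine SRW of rate $2(1-\e_T)$, that $X$ realizes the correct CRW dynamics, and that $D$ is unchanged at the common $N^\pm$-jumps. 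Hence $D(t) = M(t) + A(t)$, where $M$ is a martingale and $A$ is the compensator
$$A(t) = \int_{\tc}^t \f{\La_{\{X(s), X(s)+1\}}(s) - \La_{\{X(s)-1, X(s)\}}(s)}{s}\, \d s.$$
The predictable quadratic variation $\langle M \rangle_T$ is bounded by the expected total count of extra jumps, which is at most $\int_{\tc}^T 2(\e_T + \de_s)\,\d s = O(\e_T T) = O(T^{1-\g})$. Hence Doob's $L^2$ maximal inequality gives $\sup_t |M(t)| = O_p(T^{(1-\g)/2}) = o_p(\sqrt T)$.

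The main difficulty is the compensator $A$: the deterministic pointwise bound $|A(t)| \le \int_{\tc}^T 2\de_s\,\d s = O(T^{1/2+\e})$ just fails to be $o_p(\sqrt T)$, so one must exploit cancellation between the signed fluctuations $(\La_{\{v,v+1\}}(s) - s)$ as $v$ ranges over the vertices visited by $X$. Writing $A(T) = \sum_v L_v(T)\cdot \psi_v$, where $L_v(T) := \int_{\tc}^T \one_{\{X(s)=v\}}\,\d s$ is the local time of $X$ at $v$ and $\psi_v$ is a signed, $L_v$-weighted environment fluctuation around $v$, a second-moment estimate combining the bound $|\psi_v| \le 2\de_T$ with diffusive occupation control (via Proposition~\ref{initProp}, which forces the range to be $o_p(\sqrt T)$ on every subdiffusive scale, and hence a spread of $L_v$ producing $\sum_v L_v(T)^2 = o_p(T^{3/2})$) yields $|A(T)| = o_p(\sqrt T)$. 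A grid discretization in $t$ together with the Lipschitz bound $|A(t) - A(t')| \le 2\int_{t'}^{t}\de_s\,\d s$ extends this to $\sup_t |A(t)|$. All parameters can be made consistent by choosing $\b \in (\max(2\g, 1/2), 1)$.
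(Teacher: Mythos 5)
Your coupling construction (common rate-$(1-\e_T)$ Poisson clocks plus extra clocks of rate $\L_e(s)/s-(1-\e_T)$ for $X$ alone) is a legitimate reformulation of the paper's thinning coupling, and your treatment of the martingale part of $D=X-\Xs$ via the quadratic-variation bound $O(\e_T T)$ is sound; it plays the role of the conditional Bernoulli concentration step in the paper. The genuine gap is in the compensator $A$, which is exactly the heart of the problem (the paper devotes Proposition~\ref{margeProp}, Lemma~\ref{busyLem} and the pairing construction $\Pi_e$ to it). Your proposed mechanism --- write $A(T)=\sum_v L_v(T)\psi_v$ with $|\psi_v|\le 2\de_T$ and invoke ``a second-moment estimate'' together with $\sum_v L_v(T)^2=o_p(T^{3/2})$ --- implicitly treats the signed fluctuations $\psi_v$ as mean-zero and (essentially) uncorrelated across sites. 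Condition \eqref{eq:deviation} provides neither: it is a purely marginal deviation bound, the environment may be arbitrarily correlated across edges, and each $\L_e(s)-s$ may carry a systematic one-signed bias of order $s^{1/2+\e}$; the paper explicitly avoids any mixing or independence assumption. Without such structure the only bounds you actually have are deterministic, and they do not close: $|\psi_v|\le 2\de_T\approx T^{-\b/2+\e}$ with $\sum_v L_v(T)\le T$ gives $|A(T)|=O(T^{1-\b/2+\e})$, and even Cauchy--Schwarz with your claimed $\sum_v L_v(T)^2=o_p(T^{3/2})$ and $o_p(\sqrt T)$ visited sites gives the same order $T^{1-\b/2+\e}$, which is \emph{not} $o(\sqrt T)$ since $\b<1$. (Separately, $\sum_v L_v(T)^2=o_p(T^{3/2})$ does not follow from the range bound of Proposition~\ref{initProp} alone; it would require a bound on $\max_v L_v(T)$, which you do not establish.)

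The cancellation that actually saves the day is not stochastic cancellation across sites but a pathwise, one-dimensional crossing structure: every traversal of an edge to one side of $X(\tc)$ is matched by a later traversal in the opposite direction, and for such a matched pair the two drift contributions nearly cancel, the error being controlled by \eqref{eq:deviation} applied to the increment $\L_e([\t_i,\t_j])$ between the two crossing times. This is what the paper formalizes through the pairing $\Pi_e$, the per-edge bias $B_e$, and the split into busy and calm edges (with the range and jump-count bounds limiting how many edges of each type there are). To repair your argument you would need to replace the second-moment step by an analogous summation-by-parts/crossing-pairing estimate for $A$ (e.g.\ rewriting $A$ as a sum over edges weighted by differences of adjacent local times and controlling those differences via up/down-crossing counts), which amounts to reproducing the paper's key lemma rather than bypassing it.
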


%
%
The invariance principle is a consequence of Propositions~\ref{earlyProp} - \ref{coupProp}.
\begin{proof}[Proof of Theorem \ref{mainThm}]
	We decompose $X(t)$ as 
$ X(t) = (X(t) - Y(t)) + Y(t)
$,
	where for $t \le \tc$, we let $Y(t)$ be a rate-2 continuous time SRW independent of $X$ and $\Xs$, whereas for $t \ge \tc$, we set 
	$$Y(t) := \Xs\big((t - \tc)/(1 - \e_T) + \tc\big) + (Y(\tc) - \Xs(\tc)).$$
	In particular, $\{Y(t)\}_{t\le T}$ is a rate-$2$ continuous time SRW, so that we can apply the standard invariance principle to deduce that it converges to Brownian motion in distribution under diffusive rescaling.
To complete the argument, it thus suffices to show that 
	\[\sup_{t \le T}|X(t) - Y(t)| \in o_p(\sqrt T).\]

	To begin with, set $\a \in ((\z - 1)^{-1}, \f12)$ and note that ${\sup_{ t \le T^\a}|X(t)|} \in o_p(\sqrt T)$ by Propositions \ref{earlyProp}. Next, choose $\b$ as in Proposition \ref{coupProp}. Then, by Proposition \ref{initProp} also ${R_{T^\a, \tc}} \in o_p({\sqrt T})$. Now,
	$${\sup_{t \le \tc}|X(t)|} \le {2\sup_{t \le T^\a} |X(t)| + \sup_{T^\a \le t \le \tc}|X(t) - X({T^\a})|} = {2R_{T^\a} + R_{T^\a, \tc}},$$
	shows that $\sup_{t \le \tc}|X(t)| \in o_p(\sqrt T)$, and this is also the case when $X$ is replaced by $Y$. Therefore, it remains to control the deviation of $X(t)$ and $Y(t)$ for $t \ge \tc$. 

First, by Proposition~\ref{coupProp}, $\{T^{-1/2}(X(t) - \Xs(t))\}_{t\in [\tc, T]}$ vanishes in probability with respect to the sup-norm. Hence, it remains to show that 
	$$\sup_{\tc \le t \le T}\big|\Xs(t) - Y'(t)\big|\in o_p(\sqrt T),$$
	with $Y'(t) = \Xs\big((t - \tc)/(1 - \e_T) + \tc\big)$.
	To this end, we discretize and use the Markov property to obtain that
	\begin{align*}
		\P\Big(\hspace{-.1cm} \sup_{\tc \le t \le T}\hspace{-.1cm}|\Xs(t) - Y'(t)| > \de \sqrt T\Big) &\le \sum_{ i \le T}2\P\Big(2\sup_{t\le 2T\e_T}\hspace{-.1cm}| \Xs(t + \tc + i) - \Xs(\tc + i)| > \de \sqrt T\Big)\\
		&\le 2T\P\Big(2\sup_{t \le 2T\e_T}| \Xs(t + \tc) - \Xs(\tc)| > \de \sqrt T\Big).
	\end{align*}
Now, it follows from Doob's $L^p$-inequality that for $p \ge 2$, 
	\begin{align*}\label{eq:parts}
\P\Big(\sup_{t \le 2T\e_T}| \Xs(t + \tc) - \Xs(\tc)| > \de \sqrt T\Big)
		& \le (2/\de)^pT^{-p/2}\E\big[\big| \Xs(2T\e_T + \tc) - \Xs(\tc)\big|^p\big]\
	 \end{align*}
	 which is of order $O(\e_T^{p/2})$, since for the SRW $\Xs(t + \tc) - \Xs(\tc)$ and any $p > 1$, $\E[|\Xs(t + \tc) - \Xs(\tc)|^p]\in O(t^{p/2})$. Hence, choosing $p$ sufficiently large so that $ \e_T^{p/2}\in o(T^{-1})$ concludes the proof.
\end{proof}
The remainder of the paper is devoted to the proof of Propositions \ref{earlyProp}--\ref{coupProp}.


\subsection{Proof of Proposition \ref{earlyProp}} \label{ssec:2}
The main idea for proving Proposition \ref{earlyProp} consists of two steps. First, in Lemma \ref{envGrowLem} we leverage condition \eqref{eq:deviation} to establish a linear growth of the environment. Then, we invoke a Poisson concentration result to deduce that in this environment, the walker can travel at most at linear speed.

%
%
\begin{lemma}[Uniform boundedness of jump rates]
	\label{envGrowLem}
		Let $\e > 0$. Then,
		$$\sup_{e \in E}\P\big(\sup_{s\ge t}|{\L_e(s)} - s|s^{-\f12 - \e} > 1 \big) \in O(t^{-(\z - 1)}).$$
\end{lemma}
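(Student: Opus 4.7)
The plan is to discretize the supremum over $s \ge t$ to an integer grid and exploit the monotonicity of $\L_e$ to reduce the continuous supremum to a countable union, at which point \eqref{eq:deviation} can be applied pointwise.

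First, I would set $s_k := \lf t\rf + k$ for integer $k \ge 0$ and observe that for $s \in [s_k, s_{k+1}]$ the monotonicity of $\L_e$ yields the sandwich
\[
\L_e(s_k) - s_{k+1} \le \L_e(s) - s \le \L_e(s_{k+1}) - s_k,
\]
whence $|\L_e(s) - s| \le \max\{|\L_e(s_k) - s_k|, |\L_e(s_{k+1}) - s_{k+1}|\} + 1$. Consequently, since $s^{1/2+\e} \ge s_k^{1/2+\e}$ on $[s_k, s_{k+1}]$, the event $\{\sup_{s \ge t} s^{-1/2-\e}|\L_e(s) - s| > 1\}$ is contained in $\bigcup_{k \ge 0}\{|\L_e(s_k) - s_k| > s_k^{1/2+\e} - 1\}$.

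For $t$ large enough (depending only on $\e$) one has $s_k^{1/2+\e} - 1 \ge s_k^{1/2+\e/2}$ uniformly in $k \ge 0$. Applying \eqref{eq:deviation} to the interval $I = [0, s_k]$ with exponent $\e/2$ in place of $\e$ then gives $\sup_{e \in E} \P(|\L_e(s_k) - s_k| > s_k^{1/2+\e/2}) \le c(\e/2)\, s_k^{-\z}$, which is already uniform in $e$. A union bound over $k$ combined with a comparison to the integral $\int_t^\i x^{-\z}\,\d x$ yields the desired $O(t^{-(\z-1)})$ rate; bounded values of $t$ are absorbed into the implicit constant via the trivial bound $\P \le 1$.

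The only mildly delicate point is the exponent juggling needed to absorb the additive $-1$ (and the factor of $2$ produced by the maximum over the two endpoints) at the cost of slightly shrinking $\e$ when invoking \eqref{eq:deviation}. I do not anticipate a genuine obstacle here: the convergence of $\sum_k s_k^{-\z}$ already for $\z > 1$ is comfortably guaranteed by the assumption $\z > 5$, and the uniformity in $e \in E$ is inherited directly from the uniformity in \eqref{eq:deviation}.
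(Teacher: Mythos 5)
Your proposal is correct and follows essentially the same route as the paper: discretize $s\ge t$ onto a unit grid, use monotonicity of $\L_e$ to control the supremum within each unit interval, absorb the additive constants by slightly shrinking the exponent $\e$ before invoking \eqref{eq:deviation} pointwise, and conclude by the union bound $\sum_{j\ge t} j^{-\z}\in O(t^{-(\z-1)})$. The only cosmetic difference is that you treat the two-sided deviation via a sandwich at once, whereas the paper handles the upper deviation and remarks that the lower one is analogous.
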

\begin{proof}
	Put $\de(s) = s^{-\f12 + \e}$. First, 
	\begin{align*}
		\P \big(\sup_{s \ge t} (\L_e(s) - s)s^{-1}\de(s)^{-1}  > 1 \big) & \le \sum_{j \ge t }\P\big( \sup_{s\in [j - 1, j)} (\L_e(s) - s)s^{-1}\de(s)^{-1}  > 1  \big) \\ & \le \sum_{j \ge t}\P\big( \L_e(j) - j > j\de(j - 1) - (1 + \de(j - 1)) \big)
	\end{align*}
We have $j\de(j - 1) - (1 + \de(j - 1)) > j^{\f{1 + \e}2}$ for $j$ sufficiently large. From \eqref{eq:deviation}, it thus follows, that $\P\big( \L_e(j) - j > j\de(j - 1) - (1 + \de(j - 1)) \big)\le cj^{-\z}$ for all sufficiently large $j$ and therefore we can find $T > 1$ such that, for all $t\ge T$, 
\[
	\P \big(\sup_{s \ge t} (\L_e(s) - s)s^{-1}\de(s)^{-1}  > 1 \big)\le c \sum_{j \ge t} j^{-\z}\in O( t^{-(\z - 1)}). 
\]
	We conclude the proof by noting that bounds on the lower deviation of $\L_e(s)$ can be derived in a similar manner.
\end{proof}

%
%
We recall from \cite[Lemma 1.2]{Penrose03} a standard result on concentration of Poissonian random variables for ease of reference. 
\begin{lemma}[Poisson concentration]\label{lem:PoissConc}
Let $Z$ be a Poisson random variable with parameter $\la > 0$. Then,
	for all $x \ge \textup e^2\la$,
 \[\P(Z\ge x)\le \textup e^{-\f{x}2\log (x/\la)}. \]
\end{lemma}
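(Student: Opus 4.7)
The plan is to invoke the standard Chernoff/Cramér exponential tilting argument, since the moment generating function of a Poisson random variable is explicit: $\E[e^{tZ}] = \exp(\la(e^t - 1))$ for every $t\in\R$. Combined with Markov's inequality applied to $e^{tZ}$ for $t>0$, this yields
\[
\P(Z \ge x) \le e^{-tx}\E[e^{tZ}] = \exp\bigl(\la(e^t - 1) - tx\bigr).
\]

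The next step is to optimize the exponent over $t>0$. Differentiating gives the stationary point $t^* = \log(x/\la)$, which is positive precisely because $x\ge \textup{e}^2\la > \la$. Plugging this back into the bound, the exponent becomes
\[
\la\bigl(x/\la - 1\bigr) - x\log(x/\la) \;=\; x - \la - x\log(x/\la).
\]
Hence we obtain the intermediate estimate $\P(Z \ge x) \le \exp\bigl(-x\log(x/\la) + x - \la\bigr)$, which is the classical sharp Poisson Chernoff bound.

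The final and only non-routine step is a convexity-style comparison to reduce this to the form advertised in the statement. Writing $u:=\log(x/\la)\ge 2$ (by the hypothesis $x\ge \textup{e}^2\la$), the inequality $\exp(-xu + x - \la) \le \exp(-\tf{x}2 u)$ is equivalent to $\tf{x}2 u \ge x - \la$. Since $u\ge 2$ and $\la>0$, we have $\tf{x}2 u \ge x \ge x - \la$, and the bound follows. I expect no genuine obstacle here; the assumption $x\ge\textup{e}^2\la$ is tailored exactly so that the ``slack'' term $x-\la$ in the sharp bound can be absorbed into halving the main term $x\log(x/\la)$.
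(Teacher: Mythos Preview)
Your argument is correct: the Chernoff bound with the optimal tilt $t^*=\log(x/\la)$ gives the sharp exponent $-x\log(x/\la)+x-\la$, and the hypothesis $x\ge \textup e^2\la$ ensures $\log(x/\la)\ge 2$, so $\tfrac{x}{2}\log(x/\la)\ge x\ge x-\la$ absorbs the slack exactly as you describe. The paper itself does not supply a proof; it merely quotes the lemma from \cite[Lemma~1.2]{Penrose03}, whose proof is precisely the Chernoff computation you have written out.
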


Now, we have collected all ingredients for the proof of Proposition \ref{earlyProp}.

%
%
\begin{proof}[Proof of Proposition~\ref{earlyProp}]
	Let $\a \in (1/(\z - 1), 1/2)$. We consider the range on the time intervals $[0, t^\a]$ and $[t^\a, t]$ separately. For the early times let 
	$$A_t := \big\{\max_{x\colon |x| \le t}\L_{\{x, x + 1\}}({t^\a}) \le 2t^\a\big\}$$
	be the event that until time $t^\a$ all edges at distance at most $t$ from the origin have weight at most $2t^\a$. From assumption \eqref{eq:deviation}, it follows that the event $A^t$ occurs with high probability (whp). Moreover, on the event $A^t$, the range $R_{t^\a}$ is stochastically bounded by a Poisson process with intensity $2t^\a$ on $[0, t^\a]$, unless $X$ (and therefore also the dominating Poisson process) leaves the set $[ - t, t]$. Thus, by Lemma~\ref{lem:PoissConc},
	\[
		\P(R_{t^\a} > 3t^{2\a}| A^t)\le \exp\big(-\tf32t^{2\a}\log(\tf32)\big), 
	\]
	hence $R_{t^\a} \le 3t^\a$ whp.
	
	Turning to the times in the interval $[t^\a, t]$, we argue similarly: set now 
	$$B_t := \Big\{\sup_{\substack{t^\a \le s \le t\\ x\colon |x| \le 2t}}\L_{\{x, x + 1\}}(s)/s \le 4/3 \Big\}, $$
as the event that, for all edges at distance at most $2t$ from the origin and all times $s \in [t^\a, t]$, the normalized weight $\L_e(s)/s$ is bounded above by $4/3$. By Lemma~\ref{envGrowLem}, $B_t$ occurs whp. On the event $A_t\cap B_t$, the range $R_{t^\a, t}$ is bounded above by a Poisson process with intensity $4/3$ on the interval $[t^\a, t]$ at least unless the dominating Poisson process jumps above $2t$. Again, by Poisson concentration, we conclude that
	$R_{t^\a, t}\le \tf{3t}2 $ whp.
\end{proof}

\subsection{Coupling CRW and delayed SRW}
\label{sec:coupling}
To estimate the fluctuation of $X$ more accurately, we now introduce the coupling with a SRW $\Xs$ formally. This coupling relies on a \emph{coupling time} $\tc$ and a \emph{jump delay} $\e_T = T^{- \g}$ both depending on the time horizon $T$.
Later, we will choose $\b$ and $\g$ to be a bit smaller than 1 and $1/2$, respectively.

We construct the coupling appearing in Proposition~\ref{coupProp} and used throughout the remaining sections via a graphical representation. Let $\{A_t(v, e)\}_{t\ge \tc}$ denote a Poisson point process of arrows directed from each node $v \in V$ along an incident $e$ with intensity measure $ t^{-1}\L_e(t) \d t$. 
 From these arrows, we jointly construct walks $\{\hat X(t)\}_{t \ge \tc}$ and $\{\hat X^{\ms s}(t)\}_{t \ge \tc}$. We start at $\hat X({\tc}) = \hat X^{\ms s}({\tc}) = X(\tc)$ and let $\hat X(t)$ always follow the arrows. 
Let $\{\t_i\}_{ i \ge 1}, $ denote the jump times of $\hat X$, $e_i$ the corresponding edges traversed and construct the jumps of $\hat X^{\ms s}$ recursively as follows: given $\{\hat X^{\ms s}(s)\}_{ s \le \t_i}$ and $\t_{i + 1} = t$, sample a uniform random variable 
$$
	U_t\sim \ms{Unif}[0, t^{-1}\L_t(e_i)]$$
independently of the collection of arrows $\{A_t(\cdot, \cdot)\}_t$ and of $\hat X^{\ms s}(s), s\in [0, \t_i)$. If $U_t \le 1 - \e_T$, then $\hat X^{\ms s}(t)$ moves along the arrow. If $U_t > 1 - \e_T$, then $\hat X^{\ms s}(t)$ does not move. That is, whenever $\hat X$ encounters an arrow from $A_t(v, e)$, the walker $\hat X^{\ms s}$ decides independently whether mimicking the movement of $\hat X^{\ms s}(t)$ or staying put.

\begin{lemma}[Coupling lemma]\label{lem:coupling}
	Let $\hat X^{\ms s}, \hat X$ be defined as above. Then, $\{\hat X(t)\}_{t\ge \tc}$ has the same distribution as $\{X(t) \}_{t\ge \tc}$. Furthermore, if $\b > (2\g) \vee (\z - 1)^{-1}$, then, whp, $\{\hat X^{\ms s}(t)\}_{\tc\le t\le T}$ can be coupled perfectly to the symmetric random walk $\{X^{\ms s}(t)\}_{\tc \le t \le T}$ starting at $X^{\ms s}({\tc}) = X({\tc})$ and jumping at rate $2(1 - \e_T)$.
\end{lemma}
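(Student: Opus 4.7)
The plan is to settle the two parts of the lemma separately, with most of the work in part two.

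For the distributional identity $\hat X \stackrel{d}{=} X$, I would invoke the standard graphical/thinning representation of continuous-time Markov chains on a random background. Conditional on the environment $\{\L_e\}_{e \in E}$, the process $\hat X$ follows arrows from an inhomogeneous Poisson point process whose intensity at node $v$ along incident edge $e$ is $t^{-1}\L_e(t)\,\d t$. Because the arrows are independent across $(v,e)$ and the motion is nearest-neighbour, the process $\hat X$ sitting at $v$ at time $t$ jumps along $e$ at rate $t^{-1}\L_e(t)$; this matches the definition of the CRW. Starting both from $X(\tc)$ then yields equality in law on $[\tc, T]$.

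For the coupling of $\hat X^{\ms s}$ to the rate-$2(1-\e_T)$ SRW, I introduce the favourable event
\[
	G_T := \bigl\{ s^{-1}\L_e(s) \ge 1 - \e_T \text{ for all } s \in [\tc, T] \text{ and all } e \in E \text{ with } |e| \le 2T \bigr\}.
\]
On $G_T$, whenever an arrow of $A_t(v,e)$ arrives, the uniform variable $U_t$ is distributed on $[0, t^{-1}\L_e(t)]$ with $t^{-1}\L_e(t) \ge 1-\e_T$, so the acceptance event $\{U_t \le 1-\e_T\}$ has exact probability $(1-\e_T)/(t^{-1}\L_e(t))$. Multiplying by the arrow intensity $t^{-1}\L_e(t)$ gives a total rate of exactly $1-\e_T$ for $\hat X^{\ms s}$ to traverse each incident edge, i.e.\ total jump rate $2(1-\e_T)$ with equal left/right split. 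Since the $U_t$'s are independent of everything else, on $G_T$ the law of $\{\hat X^{\ms s}(t)\}_{t \in [\tc, T]}$ coincides with that of the rate-$2(1-\e_T)$ SRW started at $X(\tc)$, and a standard coupling (on the same probability space one may simply \emph{set} $X^{\ms s} = \hat X^{\ms s}$ on $G_T$) makes them identical with probability $\P(G_T)$.

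The heart of the proof, and the main obstacle, is verifying $\P(G_T) \to 1$ under the stated hypothesis $\b > (2\g) \vee (\z-1)^{-1}$. Here the interplay between the two parameters enters. Proposition~\ref{earlyProp} confines the walker to $[-2T, 2T]$ whp, so it suffices to control $O(T)$ edges. For each such edge, Lemma~\ref{envGrowLem} with any small $\e > 0$ gives
\[
	\P\bigl(\sup_{s \ge \tc}|\L_e(s) - s|\,s^{-1/2 - \e} > 1\bigr) \in O\bigl(T^{-\b(\z - 1)}\bigr).
\]
A union bound over the $O(T)$ relevant edges yields failure probability $O(T^{1-\b(\z-1)})$, which vanishes precisely because $\b(\z-1) > 1$. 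On the complementary event and for $s \ge \tc = T^\b$,
\[
	\bigl|s^{-1}\L_e(s) - 1\bigr| \le s^{-1/2 + \e} \le T^{-\b(1/2 - \e)},
\]
and taking $\e$ small enough (which is possible since $\b > 2\g$ is a strict inequality) makes the right-hand side at most $T^{-\g} = \e_T$. Hence $s^{-1}\L_e(s) \ge 1 - \e_T$ uniformly, i.e.\ $G_T$ occurs whp, completing the argument.
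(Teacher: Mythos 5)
Your proposal is correct and follows essentially the same route as the paper: the graphical/thinning representation gives $\hat X\overset{d}{=}X$, and the coupling succeeds on a good event where $s^{-1}\L_e(s)\ge 1-\e_T$ along the walker's path, controlled via Lemma~\ref{envGrowLem} with a union bound over the $O(T)$ edges allowed by Proposition~\ref{earlyProp}, using $\b(\z-1)>1$ for the union bound and $\b>2\g$ to beat $\e_T$. Your write-up is in fact slightly more explicit than the paper about where the condition $\b>2\g$ enters.
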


\begin{proof}
	Let $v_i$ denote the vertex reached by the $i$th jump of $\hat X$, where $v_0 = X({\tc})$. We say the coupling \emph{fails before time $T$}, if the event 
	\[F_T := \Big\{\min_{i: \tc\le \t_i\le T}\inf_{\substack{\t_i \le t \le \t_{i + 1}\\ e\sim v_i}} t^{-1}{\L_e(t)} + < 1 - \e_T \Big\}\] 
	occurs.
	The significance of $F_T$ can be seen by noting that the jumps of $\hat X$ can be identified as a local thinning of the edge arrow processes $\{A_t(v, e)\}_{t\ge \tc}$, so we need to avoid the situation in which the local intensity of the edge arrow processes is too low to sustain the thinning. Since an independent thinning of a Poisson process is again Poisson, the marginals of the coupled walks indeed have the desired distributions, i.e., $\{\hat X(t)\}_{t\ge t_c}{ = } \{X(t)\}_{t\ge t_c}$ and $\{\hat X^{\ms s}(t)\}_{t\ge t_c}{ = } \{\Xs(t)\}_{t\ge t_c}$ in distribution, as long as the coupling succeeds.

	It remains to show that the coupling does not fail whp before time $T$. Let $\hat R_T$ denote the range of $\hat X$ until time $T$. Then,
\begin{equation}\label{eq:bdF}
\P(F_T) \le \P(F_T\cap \{\hat R_T\le 2T\}) + \P(\hat R_T > 2T), 
\end{equation}
	and on $\{\hat R_T\le 2T\}$ at most $4T + 2$ edges are involved in $F_T$. Hence, by Lemma~\ref{envGrowLem}, 
\[
\P(F_T\cap \{\hat R_T\le 2T\})\le (4T + 2)T^{-\b(\z - 1)}, 
\]
	so that we infer from \eqref{eq:bdF}, the choice of $\b$ and Proposition~\ref{earlyProp} that $\lim_{T \to \i}\P(F_T) = 0$.

\end{proof}

We henceforth work only with the coupled walks and omit the notational reference to the coupling, i.e., we consider realizations of $X$ and $\Xs$ such that $(X, \Xs) = (\hat X, \hat{\Xs})$ in distribution on $[\tc, T]$.

\subsection{Proof of Proposition \ref{coupProp}}

%
%
Under the coupling between the CRW and delayed SRW introduced in Section \ref{sec:coupling}, we obtain a powerful interpretation of the deviation $ X(t) - \Xs(t)$. To make it precise, we let $\{\t_i\}_{i \ge 1}$ denote the jump times of the CRW after the coupling time $\tc$ of Lemma \ref{lem:coupling}. Furthermore, we decompose the index set of these jump times as 
$$\{1, 2, \dots \} = : \Im \cup \Ip, $$
with $\Ipm$ corresponding to jumps to the left and to the right, respectively. Let now $\Ipm(t) = \{i\in \Ipm:\t_i\in [\tc, t] \}$ denote the jumps up to time $t > \tc$. In particular, for $t\in [\tc, T]$
$$X(t) = \#\Ip(t) - \#\Im(t)$$ 
and the increments of $X(\cdot) - \Xs(\cdot)$ correspond to a thinning of the jumps at $\{\t_i\}_{i \ge 1}$. More precisely, the difference in position can change only  when $\Xs$ stays put at one of the jump times. That is, for $t\in [\tc, T]$,
\begin{equation}\label{eq:diff}
X(t) - \Xs(t) = \#\{i \in \Ip(t)\colon \Xs({\t_i-})= \Xs(\t_i) \} - \#\{j \in \Im(t)\colon  \Xs({\t_j-})=\Xs(\t_j) \}, \; 
\end{equation}

%
%
To analyze \eqref{eq:diff}, we marginalize out the randomness of the SRW. Denoting the edge traversed at time $\t_i$ by $e_i$, 
let 	\begin{align}
		\label{pi_eq}
	P_i := \P\big(\Xs({\t_i-})= \Xs(\t_i) \, |\, X, \L \big) = 1 - \f{\t_i-\e_T\t_i}{\L_{e_i}(\t_i)} 
	\end{align}
be
the probability to stay put for the SRW conditioned on the jump information of the CRW and the environment.
\begin{proposition}[Marginalizing the SRW]
	\label{margeProp} Assume $\z>5$. Then there exists $\r < 1/2$ such that
	$$\sup_{t \le T}\Big|\sum_{i \in \Ip(t)}P_i - \sum_{j \in \Im(t)}P_j\Big| \in o_p(T^\r).$$
\end{proposition}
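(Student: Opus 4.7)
My plan is to split $P_k$ into a deterministic offset and a conductance-fluctuation term. Writing $D_k:=\La_{e_k}(\t_k)/\t_k-1$ and letting $s_k\in\{\pm1\}$ encode the direction of the $k$th jump, the identity $P_k=(D_k+\e_T)/(1+D_k)=\e_T+(1-\e_T)D_k/(1+D_k)$ gives
\[
\sum_{i\in\Ip(t)}P_i-\sum_{j\in\Im(t)}P_j=\e_T\bigl(X(t)-X(\tc)\bigr)+(1-\e_T)\sum_k s_k\f{D_k}{1+D_k},
\]
and I would bound the two summands separately.

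First I would pin down a high-probability good event $\mc G$ on which \eqref{eq:deviation} is uniformly tight: $|\La_e(t)-t|\le t^{1/2+\e'}$ for every edge $e$ in the walker's range, which is $O(T)$ by Proposition~\ref{earlyProp}, and every integer $t\in[\tc,T]$. Here $\z>5$ together with $\b>1/(\z-1)$ makes the required union bound summable, and on $\mc G$ one has $|D_k|\le \t_k^{-1/2+\e'}\le T^{-\b/2+\b\e'}$ uniformly. To bound $\e_T|X(t)-X(\tc)|$, I would invoke the Doob--Meyer decomposition of the CRW itself: on $\mc G$, the compensator is $\int_{\tc}^t|\lambda_+-\lambda_-|(s)\,\d s=O(T^{1/2+\e'})$, while Doob's $L^2$-inequality applied to the martingale piece (quadratic variation $O(T)$) yields a fluctuation of $O(\sqrt{T\log T})$. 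Hence $\sup_{t\le T}|X(t)-X(\tc)|=O_p(T^{1/2+\e'})$, so $\e_T|X(t)-X(\tc)|=O_p(T^{1/2-\g+\e'})$, which is $o_p(T^\r)$ for some $\r<1/2$ as soon as $\g$ is chosen close enough to $1/2$. Crucially, this uses only the CRW's own dynamics, not the SRW coupling, so there is no circularity.

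The core of the argument is the second summand. Here the one-dimensional feature that consecutive traversals of any fixed edge must alternate in direction is decisive. Grouping by edge and writing $\tilde D:=D/(1+D)$, Abel summation per edge gives
\[
\sum_{i=1}^{N_e}(-1)^{i-1}\tilde D_{k_i^e}=\one\{N_e\text{ odd}\}\tilde D_{k_{N_e}^e}+\sum_{\substack{i\text{ odd}\\ i<N_e}}\bigl(\tilde D_{k_i^e}-\tilde D_{k_{i+1}^e}\bigr).
\]
The boundary total, summed over edges, is at most $|X(t)-X(\tc)|\cdot\max_k|\tilde D_k|=O_p(T^{1/2-\b/2+o(1)})=o_p(T^{1/2})$ for $\b$ close to $1$. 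For the pair differences I would use the identity $D_a-D_b=[\La_e([a,b])-(b-a)]/b+D_a(b-a)/b$ from $\La_e$'s monotonicity together with \eqref{eq:deviation}. A naive sum gives only $O(T^{1/2+\e'})$, so the necessary refinement applies Jensen's inequality to the concave map $x\mapsto x^{1/2+\e'}$ on each edge (whose pair-gaps sum to at most $T$), followed by H\"older across edges, using $\sum_e N_e\le 2T$ and the bound on the number of distinct visited edges by the range $O_p(T^{1/2+\e'})$. This yields a pair-difference total of order $T^{1/4+o(1)}$, comfortably $o_p(T^{1/2})$.

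Combining produces the proposition for any $\r$ slightly above $1/4$. The main obstacle is the pair-difference step: the crude bound $\sum_k|\tilde D_k|\asymp T^{1/2}$ is fatal, so one must extract the alternating-sign cancellation quantitatively via Abel summation, treat short inter-visit intervals carefully (where \eqref{eq:deviation} on $[a,b]$ is too weak to apply directly and one must fall back on full-interval bounds), and coordinate everything with the martingale-based range bound arising from the CRW's Doob--Meyer decomposition.
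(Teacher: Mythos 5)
Your overall strategy is essentially the paper's own: you exploit that successive traversals of a fixed edge alternate in direction by pairing consecutive visits (your per-edge Abel summation is exactly the pairing $\Pi_e$ used in Section~\ref{ssec:6}), you bound the unpaired boundary terms by the displacement times $\max_k|\tilde D_k|$, and you split each pair difference into a ``level'' term proportional to $\L_e(\t_i)-\t_i$ and an ``increment'' term $\L_e([\t_i,\t_j])-(\t_j-\t_i)$, attacked via \eqref{eq:deviation} and Jensen -- this is the decomposition \eqref{pairDecEq}. Two choices genuinely differ from the paper and are attractive: the identity $P_k=\e_T+(1-\e_T)D_k/(1+D_k)$, which isolates the deterministic part as $\e_T(X(t)-X(\tc))$ and lets you control it by a Doob--Meyer/Doob $L^2$ argument for the CRW itself (this is sound and, as you say, non-circular: the drift bound uses \eqref{eq:deviation} only through a Lemma~\ref{envGrowLem}-type good event together with Proposition~\ref{earlyProp}); and the replacement of the busy/calm dichotomy by Jensen per edge followed by H\"older across edges, using that the number of visited edges is $O_p(T^{1/2+\e})$ -- this also removes the appeal to Proposition~\ref{initProp} that the paper needs to count calm edges. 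For the long inter-visit gaps your exponent count does close (per edge $N_e^{1/2-\e}T^{1/2+\e}$, then $\sum_e N_e^{1/2-\e}\le K^{1/2+\e}(\sum_e N_e)^{1/2-\e}$ with $K=O_p(T^{1/2+\e})$, total $T^{5/4-\b+o(1)}$). The sign in your identity for $D_a-D_b$ is off, but harmless since only absolute values are used.

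The genuine gap sits exactly at the step you only flag. First, \eqref{eq:deviation} is a statement about a \emph{fixed} interval with failure probability $c|I|^{-\z}$; to use it simultaneously at all the random visit times $\t_i,\t_j$ of all relevant edges you must establish a uniform bound over all intervals of length at least some cutoff $T^\a$ (the analogue of \eqref{pair_eq}), and the union bound constrains $\a$ from below: with $O(T)$ candidate edges and integer endpoints you need roughly $\a\z>3$, which for $\z$ close to $5$ forces $\a>1/2$ and destroys the estimate; you must either chain over dyadic interval lengths (bringing the requirement down to about $\a>2/(\z+1)$) or restrict the edge set, as the paper effectively does through $\Era$. Second, pairs with $\t_j-\t_i\le T^\a$ are not reachable by \eqref{eq:deviation} at all; with the crude bound $|\L_e([\t_i,\t_j])-(\t_j-\t_i)|\le 2T^\a$ and the global $O_p(T)$ count of pairs their total contribution is of order $T^{1+\a-\b}$, which is $o(T^{1/2})$ only if $\a<\b-\tfrac12$, and this contribution is entirely absent from your final accounting: your claimed $T^{1/4+o(1)}$ covers only the long gaps, and the true bottleneck exponent is $\a$, not $1/4$. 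This is precisely where the paper's choice of $\th$, the busy/calm split and the hypothesis $\z>5$ do their work. Your route can be completed for $\z>5$ (dyadic union bound for the uniform increment estimate plus the short-gap count above), but as written the decisive quantitative step is unresolved and the stated exponent is not justified.
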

As the proof of Proposition~\ref{margeProp} is a little lengthy, we defer it to the next section. To leverage Proposition~\ref{margeProp}, we need another lemma giving a linear upper bound on the jump counts.
\begin{lemma}[Linearity of jump counts]
	\label{countLem}
	It holds that
	$$\lim_{T \to \i}\P(\t_{\lf 6T\rf} \ge T) = 1.$$
\end{lemma}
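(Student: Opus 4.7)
\textbf{Proof plan for Lemma~\ref{countLem}.} The rough idea is that by Lemma~\ref{envGrowLem} the normalized conductances $\La_e(t)/t$ are uniformly close to $1$ for $t\ge \tc = T^\b$, so the total instantaneous jump rate of the CRW is at most $2+o(1)$. A Poisson concentration argument then implies that the number of jumps in $[\tc, T]$ is essentially $2T$, comfortably below the threshold $6T$.

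More concretely, I would introduce the good event $G_T := A_T\cap B_T$ with $A_T := \{R_T\le 2T\}$ and
\[
B_T := \Bigl\{\sup_{|x|\le 2T}\,\sup_{\tc\le s\le T}\La_{\{x,x+1\}}(s)/s \le 3/2\Bigr\}.
\]
Proposition~\ref{earlyProp} gives $\P(A_T)\to 1$, while a union bound over the $O(T)$ edges in $[-2T,2T]$ combined with Lemma~\ref{envGrowLem} yields
\[
\P(B_T^c) = O(T)\cdot O(T^{-\b(\z-1)}),
\]
which vanishes because the constraint $\b>(\z-1)^{-1}$ from Lemma~\ref{lem:coupling} together with $\z>5$ guarantees $\b(\z-1)>1$.

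On $G_T$ the walker stays in $[-2T,2T]$ throughout $[0,T]$, so at every time $s\in[\tc,T]$ the total rate at which the CRW leaves its current vertex $v$ is $\La_{\{v-1,v\}}(s)/s + \La_{\{v,v+1\}}(s)/s \le 3$. Consequently the number $N_T$ of jumps of the CRW in $[\tc,T]$ is stochastically dominated by a Poisson random variable of mean $3T$, and the statement reduces to
\[
\P\bigl(\mathrm{Poisson}(3T)\ge 6T\bigr)\to 0.
\]

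The final step is a standard Chernoff estimate for Poisson tails: optimizing $\exp(-6T t + 3T(\mathrm e^{t}-1))$ over $t\ge 0$ at $t = \log 2$ gives a bound of $\exp(-(6\log 2 - 3)T)$, which decays exponentially. The only minor technicality is that the form of Lemma~\ref{lem:PoissConc} as stated requires $x\ge \mathrm e^2\la$, whereas here $6T/3T = 2 < \mathrm e^2$; I would therefore invoke the standard exponential Chernoff bound for Poisson random variables in place of Lemma~\ref{lem:PoissConc}. Apart from this, the argument is a direct combination of Proposition~\ref{earlyProp} and Lemma~\ref{envGrowLem} with Poisson domination, so no conceptual obstacle arises.
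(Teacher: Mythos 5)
Your proposal is correct and follows essentially the same route as the paper: restrict to the high-probability event that the range is linear (Proposition~\ref{earlyProp}) and the normalized conductances are uniformly bounded (Lemma~\ref{envGrowLem} plus a union bound), dominate the jump count on $[\tc,T]$ by a Poisson variable of linear mean, and finish with Poisson concentration. Your explicit remark that the threshold-to-mean ratio is below $\mathrm e^2$, so that one should use a standard Chernoff bound rather than Lemma~\ref{lem:PoissConc} as stated, is a small but legitimate refinement of the paper's terser argument.
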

\begin{proof}
By Proposition~\ref{earlyProp}, whp, at most $4T + 2$ edges are involved in the evolution of $X$ up to time $t$. 
Consequently, whp, the number of jumps of $X$ on $[\tc, T]$ is dominated by a Poisson($5T$)-distributed random variable $Z_T$. It follows that
\[
\P(\t_{\lf 6T\rf} \ge T)\le \P(Z_T \ge 6T), 
\]
and the latter vanishes by Poisson concentration. 
\end{proof}
%
%
Together with Lemma~\ref{countLem}, Proposition \ref{margeProp} yields Proposition \ref{coupProp}.
\begin{proof}[Proof of Proposition \ref{coupProp}]
	Let $\xi_i := \one\{\Xs(\t_i) = \Xs({\t_i-})\}$, $i \ge 1$ be the indicator that the SRW does not follow the CRW.
	 By the observations leading up to Equation \eqref{eq:diff} and Proposition \ref{margeProp}, it suffices to show that 
	$$\lim_{T \to \i}\P\Big(\sup_{\tc \le t \le T}\Big|\sum_{i \in \mc I^\pm_t} (\xi_i - P_i)\Big| \ge T^\r\Big) = 0, $$
	for both $\Ip$ and $\Im$, where $\r$ is specified in Proposition \ref{margeProp} and chosen such that $\r > \tf12 - \b \g$. By symmetry, we can contend ourselves with the statement for $\Ip$. Conditionally on $X$ and $\L$, the indicators $\{\xi_i\}_{i \ge 1}$, are independent Bernoulli($P_i$) random variables. Writing $Y(t) = \sum_{i\in \Ip(s)}\xi_i$ and $\Q(\cdot) = \P(\cdot\,|\,X, \L)$ for the conditional distribution, a standard concentration inequality such as \cite[Theorem 3.2]{ChungLu06} yields that
	\begin{equation}\label{eq:conditionalbound}
		\Q\big(\big|Y(t) - \E_{\Q} [Y(t)]\big| > T^\r\big)\le 2\exp\Big({-\f{T^{2\r}}{\E_{\Q}[ Y(t)] + T^\r/2}}\Big), \quad \tc \le t \le T.
	\end{equation}
	Now, $\E_{\Q} [Y(s)] = \sum_{i\in \Ip(s)}P_i$ and, $P_i \le 2\e_T$
		for all $i$ with $\tc \le \t_i \le T$ uniformly with probability exceeding $1 - T^{-\b (\z - 1) + 1}$ , by Lemma~\ref{envGrowLem} together with Proposition~\ref{earlyProp} and a union bound.
		
		 By Lemma~\ref{countLem}, there are at most $6T$ jumps of $X$ on $[\tc, t]$ and hence 
		\[
		\sum_{i\in \Ip(t)}P_i\le 6T \cdot 2 \e_T = 12 T^{- \g + 1}
		\]
		whp. 
	 It now follows from \eqref{eq:conditionalbound} that
		\[
			\P\Big(\sup_{\tc\le t \le T}\Big|\sum_{i \in \mc I^\pm_t} (\xi_i - P_i)\Big| \ge T^\r\Big)\le 2T\exp\big({-\f{T^{2\r}}{8 T^{- \g + 1} + T^\r/2}}\big) + \P(\#\Ip(T) > 6T), 
		\]
		which vanishes as $T\to\i$.
\end{proof}
		
%
%

\subsection{Proof of Proposition \ref{margeProp}} \label{ssec:6}
Since the random walk cannot circumvent edges, the visits to an edge to the right of $X(T^\b)$ occur in pairs of a left-to-right passage followed by a right-to-left passage (and the other way around for edges to the left of $X(T^\b)$, respectively). We formalize this observation by introducing a collection of pairings $\Pi_e \subset \Ip(T) \times \Im(T)$, $e\in E$, where  
$(i, j) \in \Pi_e$ if $e_i = e$ and 
$$j = \inf\{k \ge i\colon e_k = e, \t_k\le T\}$$
is the first index after $i$ where the edge $e$ is revisited before time $t$. It may happen, for each edge $e$, that at any time $t \in [\tc, T]$ at most two indices $j$ with $e_j = e$ stay unpaired. 
Hence, we can decompose the difference of the jump probabilities according to the visited edges:
\begin{align}
	\label{dec_diff_eq}
	\Big|	\sum_{i \in \Ip(t)}P_i - \sum_{j \in \Im(t)}P_j\Big| & = \Big|\sum_e\Big(\sum_{\substack{i \in \Ip(t) \\ e_i = e}}P_i - \sum_{\substack{j \in \Im(t) \\ e_j = e}}P_j\Big)\Big| \le \max_{i \in \Ipm(T)}2|P_i| +  \sum_{e \in E}B_e, 
\end{align}
where 
$$B_e :=  \sum_{(i, j) \in \Pi_e}|P_i - P_j|$$
denotes an upper bound on the \emph{bias} for edge $e$ accumulated until time $t$. Moreover, fix $\th := \f58 - \tf1{2(\z - 1)}$ 
and decompose the edge set $E$ into the sets
\begin{align*}
\Efr &:= \{e \in E\colon \#\{i \in \Ipm(T)\colon e_i = e\} > T^{\th}\}\text{ and }\\
	\Era &:= \{e \in E\colon 1 \le \#\{i \in \Ipm(T)\colon e_i = e\} \le T^{\th}\}
\end{align*}
of \emph{busy} and \emph{calm edges} visited more than $ T^{\th}$, respectively at most $T^\th$ times.

To prove Proposition \ref{margeProp}, we establish upper bounds for the per-edge bias in $\Era$ and $\Efr$ separately. 

%
%
\begin{lemma}[Bias at busy and calm edges]
	\label{busyLem}
	Let $\e>0$. Then
	\begin{enumerate}
	\item $\max_{i \in \Ip(T) \cup \Im(T)}P_i \in o_p\Big( T^{\tf58-\b + \e}\Big).$
	\item $\max_{e \in \Efr}B_e \in o_p( T^{1-\b+\e})$ 
	\item $\max_{e \in \Era}B_e  \in o_p\Big(T^{\frac78 + \frac1{2(\zeta-1)}-\b+\e}\Big).$
	\end{enumerate}
\end{lemma}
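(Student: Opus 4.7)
My plan is to set up all three bounds around a single good event $\Omega_T$ on which $\big|\L_e(t)-t\big|\le t^{1/2+\e}$ holds uniformly for every $t\ge \tc$ and every edge $e$ within the CRW's range up to time $T$. Using Lemma~\ref{envGrowLem} at time $\tc$ together with Proposition~\ref{earlyProp} to cap the number of visited edges by $O(T)$, this event has probability $1-O(T^{1-\b(\z-1)})\to 1$ as long as $\b(\z-1)>1$. Part~(1) is then immediate: on $\Omega_T$ one writes $P_i=(\L_{e_i}(\t_i)-\t_i+\e_T\t_i)/\L_{e_i}(\t_i)$ and estimates $P_i\le 2\t_i^{-1/2+\e}+2\e_T\le 2T^{-\b/2+\e}+2T^{-\g}$, which is $o(T^{5/8-\b+\e})$ for any fixed $\b<5/4$ (immediate) and $\g>\b-5/8$; both are compatible with $\b<1$ close to $1$ and $\g<1/2$ close to $1/2$.

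For parts~(2) and~(3), the starting point is the algebraic identity
\[
P_i-P_j=(1-\e_T)\,\frac{-\t_j(\L_e(I)-|I|)+|I|(\L_e(\t_j)-\t_j)}{\L_e(\t_i)\L_e(\t_j)}
\]
for a pair $(i,j)\in\Pi_e$ with $I:=[\t_i,\t_j]$. On $\Omega_T$ this gives
\[
|P_i-P_j|\lesssim \frac{\big|\L_e(I)-|I|\big|}{\t_i}+\frac{|I|}{\t_i\,\t_j^{1/2-\e}}.
\]
The second summand, summed over $\Pi_e$ using $\sum_\ell|I_\ell|\le T$ (the pair intervals are disjoint), contributes at most $T^{1-3\b/2+\e}$ per edge, which is negligible for both targets. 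For the first summand, I would invoke DIFF per pair interval to get $\big|\L_e(I_\ell)-|I_\ell|\big|\le |I_\ell|^{1/2+\e}$ and apply Jensen's inequality (concavity of $x\mapsto x^{1/2+\e}$):
\[
\sum_\ell|I_\ell|^{1/2+\e}\le (m_e/2)^{1/2-\e}\Big(\sum_\ell|I_\ell|\Big)^{1/2+\e}\le m_e^{1/2-\e}\,T^{1/2+\e}.
\]
For part~(2), substituting $m_e\le 6T$ from Lemma~\ref{countLem} yields $B_e\lesssim T^{1-\b+\e}$, matching the target exactly. For part~(3), substituting $m_e\le T^\th$ with $\th=5/8-1/(2(\z-1))$ yields $B_e\lesssim T^{\th/2+1/2-\b+\e}=T^{13/16-1/(4(\z-1))-\b+\e}$, which is strictly smaller than the claimed $T^{7/8+1/(2(\z-1))-\b+\e}$ whenever $\z>5$.

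The principal obstacle is ensuring the per-pair DIFF estimate holds uniformly over the $O(T)$ pairs across all edges, since many intervals $|I_\ell|$ may be short enough to render DIFF vacuous. I would handle this by a dyadic partition of pair-lengths $|I_\ell|\in[2^k,2^{k+1})$: at scale $k$ there are globally at most $2T^2/2^k$ such pairs (using $\sum_\ell|I_\ell|\le T$ per edge together with $R_T\le 2T$ from Proposition~\ref{earlyProp}), and DIFF fails per pair with probability $O(2^{-k\z})$; a union bound at scale $k$ contributes $O(T^2\cdot 2^{-k(\z+1)})$, and summing over scales $k\ge k_*$ with $2^{k_*}\ge T^{2/(\z+1)+\de}$ gives $o(1)$. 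Pairs with $|I_\ell|<2^{k_*}$ are controlled using the trivial consequence $\big|\L_e(I_\ell)-|I_\ell|\big|\le|\L_e(\t_j)-\t_j|+|\L_e(\t_i)-\t_i|\le 2\t_j^{1/2+\e}$ available on $\Omega_T$; since there are at most $m_e$ such short pairs per edge, one verifies directly (using $\z>5$ and $\b$ close to $1$) that the resulting error term is absorbed into both stated targets.
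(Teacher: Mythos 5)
Your overall scaffolding (the good event from Lemma~\ref{envGrowLem} plus the range bound, the algebraic identity for $P_i-P_j$, and Jensen's inequality for the long pair-intervals) is sound, and Part 1 is fine. The genuine gap is in your treatment of the \emph{short} pair-intervals, and it sits exactly at the step to which the exponent in Part 3 is calibrated. For a pair with $\t_j-\t_i$ below your dyadic cutoff you only use $|\L_e([\t_i,\t_j])-(\t_j-\t_i)|\le|\L_e(\t_j)-\t_j|+|\L_e(\t_i)-\t_i|\le 2\t_j^{1/2+\e}$. For a calm edge this yields a short-pair contribution of order $T^{-\b}\cdot T^{\th}\cdot T^{1/2+\e}=T^{9/8-1/(2(\z-1))-\b+\e}$, which exceeds the target $T^{7/8+1/(2(\z-1))-\b+\e}$ by the positive power $T^{1/4-1/(\z-1)}$ precisely because $\z>5$; no choice of $\b$ or $\e$ repairs this. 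The same crude bound also breaks your Part 2 in the worst case where all $\Theta(T)$ visits to a busy edge occur in short pairs: you would get $T^{3/2-\b+\e}\gg T^{1-\b+\e}$. For busy edges the remedy is to avoid per-pair concentration altogether: bound $|P_i-P_j|\le 2T^{-\b}\big((\t_j-\t_i)+\L_e([\t_i,\t_j])\big)$ and sum using $\sum(\t_j-\t_i)\le T$ and $\L_e(T)\le 2T$ whp, which is the paper's route. For calm edges the paper fixes $\a=\tfrac14+\tfrac1{\z-1}$, proves a uniform increment estimate over all sub-intervals of $[0,T]$ of length at least $T^\a$ (display \eqref{pair_eq}), and for pairs with $\t_j-\t_i\le T^\a$ exploits monotonicity of $\L_e$ to get $\L_e([\t_i,\t_j])\le 2T^\a$, so each short pair costs only $T^{-\b}T^\a$ and the calm-edge total is $T^{\th+\a-\b}=T^{7/8+1/(2(\z-1))-\b}$; this is exactly where the stated exponent comes from. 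Your argument needs this (or an equivalent) refinement for the short pairs.

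A second, fixable, issue: your dyadic union bound applies \eqref{eq:deviation} to the random intervals $[\t_i,\t_j]$ and counts those; but \eqref{eq:deviation} concerns fixed intervals, so the union bound must run over a deterministic family (e.g.\ grid-aligned intervals at each dyadic scale, with the random interval sandwiched by monotonicity of $\L_e$), as in the argument behind \eqref{pair_eq} and Lemma~\ref{envGrowLem}. The resulting count is comparable to yours, so this is a presentational rather than structural defect, but as written it is not a valid application of \eqref{eq:deviation}.
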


%
%
Before establishing the lemma, we show how to conclude the proof of the proposition.
\begin{proof}[Proof of Proposition \ref{margeProp}]
	First, by part 1 of Lemma \ref{busyLem}, we write 
	\begin{align*}
		\P\Big(\sup_{t \le T}\Big|\sum_{i \in \Ip(t)}P_i - \sum_{j \in \Im(t)}P_j\Big| \ge T^\r\Big) &\le  \P\Big(\sup_{\tc \le t \le T}\sum_{e \in E}B_e \ge T^\r \Big) + o(1).
	\end{align*}
	Let 
	$$F_{ \ms b} := \big\{\max_{e \in \Efr}B_e \le T^{1-\b+\e}\big\}\quad\text{ and }\quad
	F_{\ms c} := \Big\{\max_{e \in \Era}B_e \le T^{\frac78 + \frac1{2(\zeta-1)}-\b + 2\e}\Big\}$$
denote the events from Lemma \ref{busyLem}.
	First, we deal with the busy edges.
	 By Proposition~\ref{earlyProp}, we may assume that $\Efr$ contains at most $6T / T^\th = 6T^{1 - \th}$ edges. Thus, on the event $F_{\ms b}$, 
	\begin{equation}\label{margeProp1}\sum_{e \in \Efr}B_e \le  6T^{1 - \th + 1-\b + 2\e}, \end{equation}
	and we note that $2-\theta-\b + 2\e < 1/2$ for $\b$ and $\e$ sufficiently close to 1 and 0, respectively.

	The argument for the calm edges is entirely analogous, we only need to derive a sharper bound for the number of calm edges than $6T$. 
	By Proposition \ref{initProp} applied with $\a = \b$ and $\b = 1$, we may assume that there are at most $T^{1 - \b/2}$ calm edges. Now, we calculate, that on $F_{ \ms c}\cap \{R_{\tc, T}\le T^{1 - \b/2 + 2\e}/2\}$,
		\begin{equation}\label{margeProp2}\sum_{e \in \Era}B_e \le T^{1 - \tf\b2 +\tf78 +\tf1{2(\zeta-1)} - \beta + 2\e}=  T^{\tf{15}8 +\tf1{2(\z - 1)}- \tf{3\b}2 + 2\e}.
		\end{equation}
		Particularly, $\tf{15}8 +\tf1{2(\z - 1)}- \tf{3\b}2 + \e< 1/2$ for $\b$ and $\e$ sufficiently close to 1 and 0, respectively. From the bounds in \eqref{margeProp1} and \eqref{margeProp2} we thus conclude the proof.
\end{proof}

%
%
 For the proof of Lemma~\ref{busyLem} we recall from \eqref{pi_eq} that
	$$P_i = \f{\L_{e_i}(\t_i) - \t_i + \t_i\e_T}{\L_{e_i}(\t_i)}.$$
\begin{proof}[Proof of Lemma \ref{busyLem}]
	For all three parts, we rely heavily on Lemma \ref{envGrowLem}, which implies in particular that whp
	\begin{align}
		\label{busy_eq}
		{} \sup_{\substack{\tc \le s \le T\\ e \in \Efr \cup \Era }}|\L_e(s) -s | \le T^{5/8}. 
	\end{align}
In particular, $\inf\L_e(s)/s \ge 1/2$ for all $\tc \le s \le T$ and $e \in \Efr \cup \Era$.
	\\
\noindent	{\bf Part 1.}
First, we deduce from \eqref{busy_eq} that 
	$$P_i \le \f{T^{5/8} + T\e_T}{T^\b/2} \le 4T^{5/8 - \b},$$
	provided that $\g \ge 3/8$. \\[1ex]
	{\bf Part 2.}
 First, note that for $(i, j)\in \Pi_e$, 
	\begin{align*}
	P_i - P_j & = (1 - \e_T)\f{(\t_j - \t_i) \L_e(\t_j) - \L_e([\t_i, \t_j])\t_j}{\L_e(\t_i)\L_e(\t_j)}.
	\end{align*}
	Hence, by \eqref{busy_eq},
	$|P_i - P_j| \le 2T^{-\b} \big( (\t_j - \t_i) + \L_e([\t_i, \t_j]) \big).$
	Now, we conclude the proof by summing over $(i, j) \in \Pi_e$ and noting that $\L_e(T) \le 2T$ whp.\\[1ex]
	{\bf Part 3.}
The analysis is more delicate for rarely visited edges: there are many of them, so we have to show that the contribution of each individual edge vanishes.
	For a pair $(i, j) \in \Pi_e$, we consider the decomposition 
		\begin{align}
			\label{pairDecEq}
			(P_i - P_j)(1 - \e_T)^{-1} = \f{(\t_j-\t_i) (\L_e(\t_i) - \t_i)}{\L_e(\t_i)\L_e(\t_j)} - \f{\t_i(\L_e([\t_i, \t_j]) - (\t_j - \t_i))}{\L_e(\t_i)\L_e(\t_j)}, 
		\end{align}
		and bound the two summands separately. First, applying \eqref{busy_eq},
	$$ \f{(\t_j-\t_i) |\L_e(\t_i) - \t_i|}{\L_e(\t_i)\L_e(\t_j)} \le 4T^{5/8} 
		\f{\t_j - \t_i }{T^{2\b}},$$
		and summing over all pairs $(i, j) \in \Pi_e$ yields the bound $T^{13/8 - 2\b }$ which is smaller than $T^{-1/4}$ for $\b$ close to 1.
		
		It remains to bound the second term on the right-hand side in \eqref{pairDecEq}.  To bound 
		$$\sum_{(i, j) \in \Pi_e}\big|\L_e([\t_i, \t_j]) - (\t_j - \t_i)\big|, $$
	we fix $\a := \tf14 + \tf1{\z - 1} > \tf2{\z  - 1}$, and argue as in Lemma \ref{envGrowLem} to see that whp for $\e_0 := 1/16$,
	\begin{align}
		\label{pair_eq}
		\max_{e \in \Era}\sup_{\substack{ s, s' \le T\\ |s' - s|\ge T^\a}}|s - s'|^{-1/2 - \e_0}\big|\L_e([s, s']) -(s' - s)\big|.
	\end{align}

		Hence, by Jensen's inequality, 
		\begin{align*}
			\sum_{\substack{(i, j) \in \Pi_e\\ \t_j - \t_i \ge T^\a}}\hspace{-.1cm}\big|\L_e([\t_i, \t_j]) - (\t_j - \t_i)\big|\hspace{-.0cm} \le\hspace{-.2cm} \sum_{(i, j) \in \Pi_e}\hspace{-.2cm} (\t_j - \t_i)^{1/2 + \e_0} 
			\le (T^\th)^{1/2 - \e_0}T^{1/2 + \e_0}
			\le T^{(\th + 1)/2 + \e_0}.
		\end{align*}
 so that the contribution is at most $T^{(\th + 1)/2 - \b + \e} \le T^{-1/16}$.

	It remains to deal with the contributions from pairs satisfying $\t_j - \t_i \le T^\a$.
	First, under \eqref{pair_eq} we have $\L_e([\t_i, \t_j]) \le 2T^\a$, so that
	$$		\sum_{\substack{(i, j) \in \Pi_e\\ \t_j - \t_i \le T^\a}} \big|\L_e([\t_i, \t_j]) - (\t_j - \t_i)\big| \le \#\Pi_e T^\a \le T^{\th + \a}. $$
Inserting the definitions of $\th$ and $\a$ concludes the proof.
\end{proof}

%
%

\subsection{Proof of Proposition \ref{initProp}}
\label{init_sec}
The main idea to prove Proposition \ref{initProp} is to start by controlling deviations until time $T^\a$ for $\a < 1/2$ and then bootstrap to successively longer time-scales. To that end, we rely on a coupling with a SRW that is a small variant of the one introduced in Section \ref{sec:coupling}. Instead of starting the coupling at time $T^\b$, the coupling starts already earlier, namely at time $T^\a$. Moreover, the SRW is now slowed down stronger, namely by a factor $1 - \e_{T^\a} = 1 - T^{-\a\g}$ instead of $1 - T^{-\g}$. A fundamental consequence of this change is that in contrast to the scalings chosen in Section \ref{sec:coupling}, it is no longer necessarily the case that the SRW and the CRW deviate in at most $o_p(\sqrt T)$ many steps. Rather, we leverage the number of coupling failures together with the known range of the SRW in order to bound the range of the CRW.

\begin{proof}[Proof of Proposition \ref{initProp}]
	{\bf Part 1.}
	By the law of the iterated logarithm, the displacement of the SRW until time $T^\b$ is at most of order $CT^{\b/2}\log\log t \le T^{\b - \a/2}$. Hence, it suffices to bound the deviation between the SRW and the CRW $\{X(t)\}_{t \le T^\b}$. Conditioning on $\{X(t)\}_{t \le T^\b}$ shows that there are whp at most $T^\b \e_{T^\a} = T^{\b - \g \a} $ coupling failures. 
	Hence, we obtain the desired worst-case deviation since $\g < 1/2$.\\[1ex]
	{\bf Part 2.}
	Recall that $0 < \a < \b < 1$ and by part 1, we may assume that $\a < 1/2$. We apply part 1 inductively to deviations over time scales $t_k = T^{1 - (1 - \a)^k}$ with growing $k$. For $k = 1$, this gives the time scale $T^\a$, where we can apply Proposition \ref{earlyProp}.
	
	Suppose that we have proven the desired result for the time scale $t_k$. 	Then, we apply part 1 with $\a = 1 - (1 - \a)^k$, $\b = 1 - (1 - \a)^{k + 1}$ for which
	\[ \b - \a/2 = 1 - (1 - \a)^{k + 1} - \f{1 - (1 - \a)^k}2 = 1/2 - \e(k) \]
	where $\e(k) = (1 - \a)^k(1/2 - \a) > 0$. By this, we conclude the proof since $t_k$ approaches 1 as $k\to \i$.
\end{proof}

\subsection*{Acknowledgement.} The authors thank H.~Pitters for inspiring discussions in the early phase of this project. We acknowledge support from the Deutsche Forschungsgemeinschaft (DFG, German Research Foundation) through ``Scientific Network Stochastic Processes on Evolving Networks''. The research of CM is funded by the DFG  through the priority programme SPP 2265 ``Random Geometric Structures''.

\bibliography{lit}
\bibliographystyle{abbrv}
\end{document}